\newtheorem{thm}{Theorem}[section]
\newtheorem{cond}[thm]{Condition}
\newtheorem{lem}[thm]{Lemma}
\newtheorem{prop}[thm]{Proposition}
\newtheorem{defi}[thm]{Definition}
\definecolor{halfgray}{gray}{0.55}
\definecolor{webgreen}{rgb}{0,.5,0}
\definecolor{webbrown}{rgb}{.6,0,0}
\definecolor{Maroon}{cmyk}{0, 0.87, 0.68, 0.32}
\definecolor{royalblue}{cmyk}{1, 0.50, 0, 0}
\definecolor{Black}{cmyk}{0, 0, 0, 0}
\numberwithin{equation}{section}
\newcommand{\sss}{\scriptscriptstyle}
\renewcommand{\P}{\mathbb{P}}
\newcommand{\indi}{I}
\newcommand{\Ccal}{\mathcal{C}}
\newcommand{\Acal}{\mathcal{A}}
\newcommand{\Ncal}{\mathcal{N}}
\newcommand{\Lcal}{\mathscr{L}}
\newcommand{\dg}{\boldsymbol d}
\newcommand{\Unex}{\mathcal{U}}
\newcommand{\Dcal}{\mathcal{D}}
\newcommand{\elltwo}{\ell_{\neq 2}}
\newcommand{\Cmax}{\mathscr{C}_{\rm max}}
\newcommand{\cluster}{\mathscr{C}}
\newcommand{\Poi}{{\sf Poi}}
\newcommand{\E}{\mathbb{E}}
\newcommand{\e}{\mathrm{e}}
\newcommand{\CMd}{\mathrm{CM}_n(\boldsymbol{d})}
\newcommand{\CMdd}{\mathrm{CM}_{n-n_2}(\boldsymbol{d'})}
\newcommand{\CMdt}{\mathrm{CM}_{n-m_i}(\tilde{\dg})}
\newcommand{\Cmaxc}{\Cmax^{\sss \mathtt Cycle}}
\newcommand{\Cmaxl}{\Cmax^{\sss \mathtt Line}}
\newcommand{\Cjl}{\cluster_j^{\sss \mathtt Line}}
\renewcommand{\P}{\mathbb{P}}
\newcommand{\prob}{\mathbb{P}}
\newcommand{\op}{o_{\sss \prob}}
\newcommand{\Op}{O_{\sss \prob}}
\newcommand{\Tp}{\Theta_{\sss \prob}}
\newcommand{\dto}{\overset{d}{\rightarrow}}
\newcommand{\pto}{\overset{\sss\prob}{\rightarrow}}
\newcommand{\eqn}[1]{\begin{equation}#1\end{equation}}
\newcommand{\eqan}[1]{\begin{align}#1\end{align}}
\newcommand{\nn}{\nonumber}
\newcommand{\ddsum}{\sideset{_{}^{}}{_{}^{*}}\sum}
\begin{document}

\title{Almost-2-regular random graphs}
\author{Lorenzo Federico}
\maketitle

\begin{abstract}

We study a special case of the configuration model, in which almost all the vertices of the graph have degree $2$. We show that the graph has a very peculiar and interesting behaviour, in particular when the graph is made up by a vast majority of vertices of degree $2$ and a vanishing proportion of vertices of higher degree, the giant component contains $n(1-o(1))$ vertices, but the second component can still grow polynomially in $n$. On the other hand, when almost all the vertices have degree $2$ except for $o(n)$ which have degree $1$, there is no component of linear size.
\end{abstract}
\medskip
{\noindent \it MSC 2010.} 05C38, 05C80, 60C05.\\
{\it Keywords and phrases.} Configuration model, second largest component, degree sequence.\\
\section{Introduction}
In this paper, we study the behaviour of random graphs in which the majority of vertices have degree $2$, but there exists a vanishing proportion of vertices of either higher or lower degree. The study of random graphs with a prescribed degree sequence has been a very popular topic for decades. The usual way to construct them has been the procedure known as the \emph{configuration model}, which yields a random multigraph over $n$ vertices with a fixed degree sequence $\dg=\{d_1,d_2,...,d_n\}$. Conditioning on simplicity, the configuration model has uniform distribution over all the labeled graphs with such degree sequence. In the configuration model each vertex $v_i$, $i \in [n]$ is given $d_i$ half-edges, and all the half-edges are paired uniformly at random to create the edges of the graph. This model is very flexible, allowing for any arbitrary degree sequence, while remaining tractable, thanks to its high level of symmetry. For these reasons it has become widely popular, both among applied researchers trying to fit the model to empirical degree distributions observed in real-world networks, such as power laws, and theoretical mathematicians who were looking for a tractable model that could exhibit a wide variety of behaviours.
The study of connectivity of random graphs with a prescribed degree sequence started with the work of Bollob\'as \cite{Boll01} and Wormald \cite{Wor81}, who proved that if the minimum degree is at least $3$, the graph is connected with high probability (w.h.p.), and then it was further developed by \L uczak \cite{Luc92}, and the author and van der Hofstad \cite{FedHof16}, who analyzed the asymptotic connectivity probability when vertices of lower degree are allowed.
Moreover, it is known that the configuration model, like many other random graphs, presents a phase transition. The first investigation of this phenomenon is due to Molloy and Reed \cite{MolRee95}, who proved that the critical point for the phase transition is identified by $\sum_{i=1}^nd_i(d_i-2)/n= 0$. Above such threshold there exists w.h.p. a connected component that contains a positive fraction of the vertices, below, instead, the size of the largest component is determined by the highest degree vertex up to a constant, as proved by Janson \cite{Jan08}. For the study of the behaviour close to criticality see the work by Dhara et al. \cite{DhaHofLeeSen16a, DhaHofLeeSen16b} and van der Hofstad et al \cite{HofJanLuc18}.

In all these papers the situation in which vertices of degree $2$ make up almost the entirety of the graph has been left out as a boundary case that was hard to address with the usual techniques. Here, we will analyze in detail what happens under such an assumption, showing that indeed these graphs show very peculiar properties that are rarely found in other settings. 

	\subsection*{ Notation.}
All limits in this paper are taken as $n\to \infty$ unless stated otherwise.
For asymptotic statements we use the following notation:
\begin{itemize}
\item Given a sequence of events $(\mathcal{A}_n)_{n \geq 1}$ we say that $\mathcal A_n$ happens \emph{with high probability (w.h.p.)} if $\P(\mathcal{A}_n) \to 1$.
\item Given the random variables $(X_n)_{n \geq 1}, X$, we write $X_n \dto X$ and $X_n \pto X$ to denote convergence in distribution and in probability, respectively.
\item For sequences of (possibly degenerate) random variables  $(X_n)_{n \geq 1}$, $(Y_n)_{n \geq 1}$, we write $X_n=O(Y_n)$ if the sequence $(X_n/Y_n)_{n \geq 1}$ is bounded almost surely; $X_n=o(Y_n)$ if $X_n/Y_n \to 0$ almost surely; $X_n =\Theta(Y_n)$ if $X_n=O(Y_n)$ and $Y_n=O(X_n)$ .
\item Similarly, for sequences $(X_n)_{n \geq 1}$, $(Y_n)_{n \geq 1}$ of (possibly degenerate) random variables, we write $X_n=\Op(Y_n)$ if the sequence $(X_n/Y_n)_{n \geq 1}$ is tight; $X_n=\op(Y_n)$ if $X_n/ Y_n \pto 0$; and $X_n =\Tp(Y_n)$ if $X_n=\Op(Y_n)$ and $Y_n=\Op(X_n)$.  
\item $\Poi(\lambda)$ denotes a Poisson distributed random variable with mean $\lambda$ and ${\sf Bin} (n,p)$ denotes a random variable with binomial distribution with $n$ trials each with probability of success $p$.
\end{itemize}

We will use the standard abbreviations i.i.d. for independent identically distributed and a.s. for almost surely.

\section{Main Results}

We first recall that the configuration model $\CMd$ is constructed from a degree sequence $\dg=\{d_1,d_2,...,d_n\}$ such that $\sum_{i=1}^nd_i=:\ell$ is even, giving to each vertex $v_i$ for any $i \in [n]$, $d_i$ half-edges and pairing them uniformly at random to form the edges. We consider the pairing as a sequential process, i.e., we pick one of the $\ell$ half-edges, and we pair it with another one chosen uniformly at random among the $\ell-1$ remaining to form an edge, then we pick another half edge and pair it with a uniformly chosen one among the $\ell-3$ remaining ones to build another edge, and we iterate until all the half-edges have been paired. The specific rule according to which the new half-edge is chosen at the beginning of each new step is not relevant for the distribution of the final outcome, something that we will exploit in many proofs.
We study separately two cases depending on whether the degree sequence allows for vertices of degree larger than $2$ and for vertices of degree $1$. We always assume that there are no vertices of degree $0$ since they do not interact with the rest of the graph and are thus uninteresting.
For each $j \in \mathbb N$ we define $\Ncal_j$ as the set of vertices of degree $j$ in $\dg$ and $n_j$ as its cardinality. 
We write the total number of half-edges as
\eqn{
\ell:=\ell(n)=\sum_{i=1}^n d_i= \sum_{j=1}^\infty jn_j,
}
and $\Lcal$ for the set of all the half-edges. Moreover, we define the total number of half-edges attached to vertices of degree different from $2$ as

\begin{equation}
\elltwo :=\sum_{j\neq 2} j n_j,
\end{equation}
and $\Lcal_{\neq 2}$ for the set of all such half-edges.
Note that if $\ell$ is even, then also $\elltwo$ is even, since $\elltwo=\ell-2n_2$.
We study separately the two cases in which the degree sequence allows for vertices of degree higher or lower than $2$, as their behaviour is  completely different.
 We define the conditions under which we call a sequence of configuration models $\CMd$ a sequence of \emph{upper almost-two-regular graphs}.

\begin{cond}[Upper almost-two-regular graph]
\label{2-up}
We define a sequence of random graphs as a sequence of upper almost-two-regular graphs if it is distributed as a sequence $(\CMd)_{n\in \mathbb N}$  and the following conditions are satisfied as $n \to \infty$:

\begin{itemize}
\item $\ell/n\to 2,$
\item $n_0,n_1=0,$
\item $\elltwo \to \infty$.
\end{itemize}
\end{cond}
This conditions can be described as the degree sequence having a vast majority of vertices of degree $2$ and a diverging but sublinear number of vertices of higher degree.

It was mentioned by van der Hofstad \cite{FedHof16, Hofs17} that in the special case in which only vertices of degree $2$ and $4$ were allowed, the largest component contains almost all the vertices in the graph. We generalize and strengthen this result in the main theorem of this paper, which describes the asymptotyc behaviour of the two largest components, $\Cmax$ and $\cluster_2$:

\begin{thm}Consider a sequence $\CMd$ that satisfies Condition \ref{2-up}. Then as $n \to \infty$, 
\label{thm:MainUp}
\eqn{
n-|\Cmax|= \Op(n/\elltwo),\label{main:first}
}
and
\eqn{
\dfrac{|\cluster_2|\ell_{\neq 2}}{n}\dto Y_2,\label{main:second}
}
where $Y_2$ is a random variable with support on $\mathbb{R}^+$ and cumulative distribution function

\begin{align}
F_{Y_2}(a)=\exp\Big\{-\int_a^\infty \frac{\e^{-2r}}{2r}dr\Big\}.
\end{align}

\end{thm}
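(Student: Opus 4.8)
The plan is to exploit the fact that, under Condition \ref{2-up}, the degree-$2$ vertices merely subdivide edges, so the whole graph decomposes into a \emph{core} together with \emph{lines} and \emph{pure cycles}. Formally, trace each maximal path of degree-$2$ vertices and contract it to a single edge. By the exchangeability of the pairing (the order in which half-edges are matched is irrelevant, as noted above), the induced pairing of the $\elltwo$ half-edges attached to vertices of degree $\neq 2$ is itself uniform, so the contracted graph is a configuration model on the vertices of degree $\geq 3$ (recall $n_0=n_1=0$). Each degree-$2$ vertex then lies either on a \emph{line} (a contracted path joining two half-edges of the core) or on a \emph{pure cycle} (a path of degree-$2$ vertices closing up on itself, which forms an isolated component, since both half-edges of every vertex on it are used internally).

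First I would reduce both statements to the pure cycles. The core has minimum degree at least $3$, so by the connectivity result of Bollob\'as and Wormald cited in the introduction it is connected w.h.p.; hence w.h.p.\ all core vertices, and therefore both endpoints of every line, lie in a single component. Consequently $\Cmax$ contains the core and \emph{all} of the lines, so that $n-|\Cmax|$ equals exactly the number of degree-$2$ vertices lying on pure cycles, while $\cluster_2$ is w.h.p.\ the longest pure cycle. This is the step that turns the problem into a self-contained cycle computation and decouples it from the detailed high-degree sequence, which explains why the limit $Y_2$ is universal.

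Next I would compute the cycle spectrum by a factorial-moment calculation. Letting $N_j$ denote the number of pure $j$-cycles, a $j$-cycle is obtained by choosing and cyclically ordering $j$ degree-$2$ vertices, orienting the two half-edges at each, and matching the $2j$ half-edges into the prescribed pairs, which gives
\begin{equation}
\E[N_j]\sim \frac{1}{2j}\Big(\frac{2n_2}{\ell}\Big)^{\!j}.
\end{equation}
Writing $\ell\sim 2n$ and $2n_2/\ell=1-\elltwo/\ell$, the factor $(2n_2/\ell)^j$ equals $\exp(-(1+o(1))\,j\,\elltwo/\ell)$, so the mean measure concentrates on lengths $j\asymp n/\elltwo$. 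Summing $\sum_j j\,\E[N_j]\sim n/\elltwo$ and applying Markov's inequality to the total number of pure-cycle vertices yields $n-|\Cmax|=\Op(n/\elltwo)$, which is \eqref{main:first}. Under the scaling $j=a\,n/\elltwo$ the sums $\sum_{j\ge a n/\elltwo}\E[N_j]$ converge, after the change of variables and using $\ell\sim 2n$, to the tail $\int_a^\infty \tfrac{\e^{-2r}}{2r}\,dr$ of the intensity appearing in $F_{Y_2}$.

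Finally I would upgrade the first moment to a distributional statement. Cycle counts in the configuration model are asymptotically independent Poisson variables, and I would show, via the Stein--Chen method (or the method of factorial moments, controlling the negligible overlap between distinct long cycles), that the point process of rescaled pure-cycle lengths $\{\,j\,\elltwo/n\,\}$ converges to a Poisson point process on $\mathbb R^+$ whose intensity is the limit of $\E[N_j]$, i.e.\ $\tfrac{\e^{-2r}}{2r}\,dr$. The maximum of such a process has distribution function equal to its void probability, $\prob(\text{no point}\ge a)=\exp\{-\!\int_a^\infty \tfrac{\e^{-2r}}{2r}\,dr\}=F_{Y_2}(a)$; since $\cluster_2$ is w.h.p.\ the longest pure cycle, this gives \eqref{main:second}. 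The main obstacle is precisely this \emph{uniform} Poisson approximation across the entire range of long cycles (lengths up to $\Theta(n/\elltwo)$, whose counts are $\Op(1)$ yet account for the whole integral): one must bound the dependencies and factorial moments uniformly as the cycle length grows with $n$. The remaining, easier points are verifying the mean-measure asymptotics uniformly in $j$ and confirming that no line or residual core structure ever exceeds the longest pure cycle, so that $\cluster_2$ is indeed a pure cycle.
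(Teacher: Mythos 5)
Your proposal follows essentially the same route as the paper: contracting the degree-$2$ paths to obtain a kernel distributed as a configuration model with minimum degree $3$ (the paper's Lemma \ref{lem:KernelDistribution}), invoking Wormald's connectivity result to show $\Cmax$ absorbs the kernel and all lines so that only pure cycles remain outside (Proposition \ref{prop:ProofofMain1}), bounding the total number of cycle vertices by a first-moment argument (Proposition \ref{prop:cycletot}, which the paper does via an exploration process rather than summing $j\,\E[N_j]$, an immaterial difference), and establishing the Poisson point process limit for long cycle lengths by uniform factorial-moment estimates (Proposition \ref{prop:poiconv}), with the void probability giving $F_{Y_2}$. The approach and the identified key difficulty (uniformity of the Poisson approximation over lengths up to $\Theta(n/\elltwo)$) match the paper's proof.
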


We immediately note how this theorem implies the possibility for a supercritical random graph to have components outside the giant of every intermediate order of magnitude between $1$ and $ n$, choosing an appropriate asymptotic of $\elltwo$. This is thus a counterexample to the universality of the ``no middle ground" property,  i.e. that in a supercritical random graph the connected components outside the giant are always very small, typically with the size of the second largest component expressed as a polynomial in $\log n$, which holds for the most famous random graph models, as we will discuss in the next section.

We also define the \emph{lower almost-two-regular graphs} as sequences of configuration models satisfying the following conditions as $n \to \infty$:

\begin{cond}[Lower almost-two-regular graph]
\label{2-lo}
We define a sequence of random graphs as a sequence of lower almost-two-regular graphs if it is distributed as a sequence $(\CMd)_{n\in \mathbb N}$  and the following conditions are satisfied as $n \to \infty$:

\begin{itemize}
\item $\ell/n\to 2,$
\item $n_0=0,$
\item $\elltwo=n_1\to \infty$.
\end{itemize}
\end{cond}
In this case we are instead considering degree sequences in which the vast majority of vertices have degree two, and the only other ones are a sublinear number of degree $1$ vertices. In this case the behavior of the graph is radically different, as expressed in the following theorem. We define $\cluster_j$ as the $j$-th largest cluster, then we obtain:

\begin{thm}Consider a sequence $\CMd$ that satisfies Condition \ref{2-lo}. Then as $n \to \infty$, for  every $j\in \mathbb N$
\label{thm:MainLow}
\begin{equation}
|\cluster_j|=\dfrac{2n\log n_1}{n_1}(1+\op(1)).
\end{equation}

\end{thm}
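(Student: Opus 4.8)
The plan is to exploit the fact that under Condition \ref{2-lo} the only degrees present are $1$ and $2$: indeed $\elltwo=n_1$ forces $n_j=0$ for all $j\ge 3$, and $n_0=0$ by assumption. Hence every connected component of $\CMd$ is either a \emph{line} (a path whose only two degree-$1$ vertices are its endpoints) or a \emph{cycle} (consisting entirely of degree-$2$ vertices). Since $\elltwo=n_1$ is even and each line has exactly two of the degree-$1$ vertices as endpoints, there are exactly $n_1/2$ lines. The strategy is to show that the top order statistics of $(|\cluster_j|)_j$ are governed entirely by the longest lines, while no cycle is long enough to compete. A line with $k$ interior degree-$2$ vertices is a component of size $k+2$, and since the relevant scale $\tfrac{2n\log n_1}{n_1}$ diverges, the additive $+2$ will be absorbed in the final $(1+\op(1))$.

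First I would compute the first moment of the number $X_k$ of line components with exactly $k$ interior degree-$2$ vertices. Counting ordered choices of the two endpoints in $\Ncal_1$ and of the $k$ interior vertices in $\Ncal_2$, weighting by the $2^k$ half-edge orientations, and multiplying by the pairing probability $\prod_{i=0}^{k}(\ell-2i-1)^{-1}$ of the $k+1$ required edges, the standard simplification gives
\begin{equation}
\E[X_k]=\frac{n_1(n_1-1)}{2}\,\frac{1}{\ell-2k-1}\prod_{i=0}^{k-1}\frac{2(n_2-i)}{\ell-2i-1}.
\end{equation}
Writing $\nu:=2n_2/\ell$ and using $1-\nu=n_1/\ell$, a direct estimate valid uniformly for $k=O\big(\tfrac{n\log n_1}{n_1}\big)$ yields the tail count
\begin{equation}
\E\Big[\sum_{m\ge k}X_m\Big]=\frac{n_1}{2}\,\nu^{\,k}\,(1+o(1)).
\end{equation}
Since $-\log\nu=\log(1+n_1/(2n_2))=\tfrac{n_1}{2n_2}(1+o(1))$ and $n_2=n(1+o(1))$, the crucial observation is that because $n_1=o(n)$ all the corrections to $-\log\nu$ and to the truncated products contribute only a multiplicative $(1+o(1))$ to the length scale, which is exactly the precision claimed. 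Setting $k^\star:=\tfrac{2n\log n_1}{n_1}$ one finds $\E\big[\sum_{m\ge k^\star(1\pm\varepsilon)}X_m\big]=\tfrac12 n_1^{\mp\varepsilon}(1+o(1))$.

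This threshold estimate gives both bounds through a first/second-moment argument on $N_{\ge K}:=\sum_{m\ge K}X_m$. For the upper bound take $K_+=k^\star(1+\varepsilon)$; then $\E[N_{\ge K_+}]=\tfrac12 n_1^{-\varepsilon}(1+o(1))\to0$, so by Markov's inequality w.h.p.\ no line has more than $K_+$ interior vertices. For the lower bound take $K_-=k^\star(1-\varepsilon)$, for which $\E[N_{\ge K_-}]=\tfrac12 n_1^{\varepsilon}(1+o(1))\to\infty$; computing $\E[N_{\ge K_-}(N_{\ge K_-}-1)]$ by counting ordered pairs of vertex-disjoint long lines and verifying that the joint pairing probability factorizes up to a $1+o(1)$ factor (deleting one line changes the relevant half-edge counts only by $o(\ell)$) shows $\Var(N_{\ge K_-})=o(\E[N_{\ge K_-}]^2)$, so $N_{\ge K_-}\pto\infty$ and, for every fixed $j$, w.h.p.\ there are at least $j$ lines with at least $K_-$ interior vertices. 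To exclude long cycles, the analogous first-moment computation gives that the expected number of cycles on exactly $m$ degree-$2$ vertices is $\tfrac{1}{2m}\nu^{m}(1+o(1))$, whence the expected number of cycles of size at least $K_+$ is $\le \tfrac{\nu^{K_+}}{2K_+(1-\nu)}(1+o(1))\to0$, so w.h.p.\ no cycle reaches $K_+$ either. Combining the three estimates, for every fixed $j$ and every $\varepsilon>0$ we get w.h.p.
\begin{equation}
\frac{2n\log n_1}{n_1}(1-\varepsilon)\le|\cluster_j|\le\frac{2n\log n_1}{n_1}(1+\varepsilon),
\end{equation}
the lower bound because at least $j$ lines reach $K_-$ and the upper bound because no component of either type exceeds $K_+$; letting $\varepsilon\downarrow0$ gives $|\cluster_j|=\tfrac{2n\log n_1}{n_1}(1+\op(1))$.

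The main obstacle is the second-moment step: controlling the correlation between two long line components in $\CMd$ and showing the counts $N_{\ge K_-}$ behave as an (asymptotically Poisson) sum, together with making the first-moment asymptotics uniform over the entire range $k\le K_+$ rather than for fixed $k$. The latter requires bounding $\prod_{i=0}^{k-1}\tfrac{2(n_2-i)}{\ell-2i-1}$ against $\nu^k$ with errors controlled by quantities such as $\tfrac{n_1 k^2}{n_2^2}$ and $k(n_1/n_2)^2$, which must be shown to be $o(1)$ uniformly up to $k=K_+$; this is where the hypotheses $n_1\to\infty$ and $n_1=o(n)$ enter decisively.
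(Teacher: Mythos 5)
Your proposal is correct and follows essentially the same route as the paper: decompose the graph into line and cycle components, run a first-moment argument at threshold $(1+\varepsilon)\tfrac{2n\log n_1}{n_1}$ and a second-moment (Chebyshev) argument at $(1-\varepsilon)\tfrac{2n\log n_1}{n_1}$ on the number of long lines, and dispose of cycles separately. The only cosmetic differences are that the paper obtains the survival probability $\prod_{t}\bigl(1-\tfrac{n_1-1}{\ell-2t-1}\bigr)$ via the sequential exploration process rather than by enumerating line subgraphs, and it rules out long cycles by reusing its bound $\mathbf C(n)=\Op(n/n_1)$ on the total number of vertices in cycles instead of a direct first-moment count of long cycles; both yield the same estimates.
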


We see how this  model closely resembles subcritical random graphs, with the largest components being all very similar to each other and with no component of linear size.

We do not investigate in this paper what happens when the vast majority of vertices have degree $2$ but there are both vertices of higher and lower degree. That would require a very detailed case analysis depending on the relative scaling of $n_i$ for different $i$s as $n \to \infty$ which we think could be interesting for a future work.

\section{Discussion of the results}

In this section, we present some relevant consequences and some interesting observations related to our main theorems. 
The first interesting result to point out is the fact that the upper almost-two-regular graph is a counterexample to the very general ``no middle ground" property of supercritical random graphs.
It was first observed by  Erd\H{o}s and R\'enyi \cite{ErdRen60} that the random graph $G(n,m)$ (the uniform random graph with $n$ vertices and $m$ edges), in the supercritical phase, i.e., when $m>(1+c)n/2$, presents a big gap between the largest component, which contains a positive fraction of the vertices, and the second one, which contains $\Tp(\log n)$ vertices. Such property has been established to hold for a huge class of random graphs such as inhomogeneous random graphs \cite{BolJanRio07}, most of the other cases of the configuration model \cite{MolRee95} and percolation on the Hamming graph \cite{HofLucSpe10}. For percolation on a large box or torus on $\mathbb Z^d$, the second largest component in the supercritical phase is of order $\Tp(\log n^{d/(d-1)})$ \cite{KesZha90}. In the case presented in this paper instead, we see from Theorem \ref{thm:MainUp} that the upper almost-two-regular graph shows a clear supercritical behaviour, that is, there exist a unique giant component, that has a clearly different structure from all the other components and includes a positive fraction (actually, almost all) of the vertices, but the second largest component can be of every possible order of magnitude such that $1\ll |\cluster_2|\ll n$, choosing the right scaling between $n$ and $\elltwo$.
\begin{figure}[h]
\includegraphics[width=0.4\textwidth]{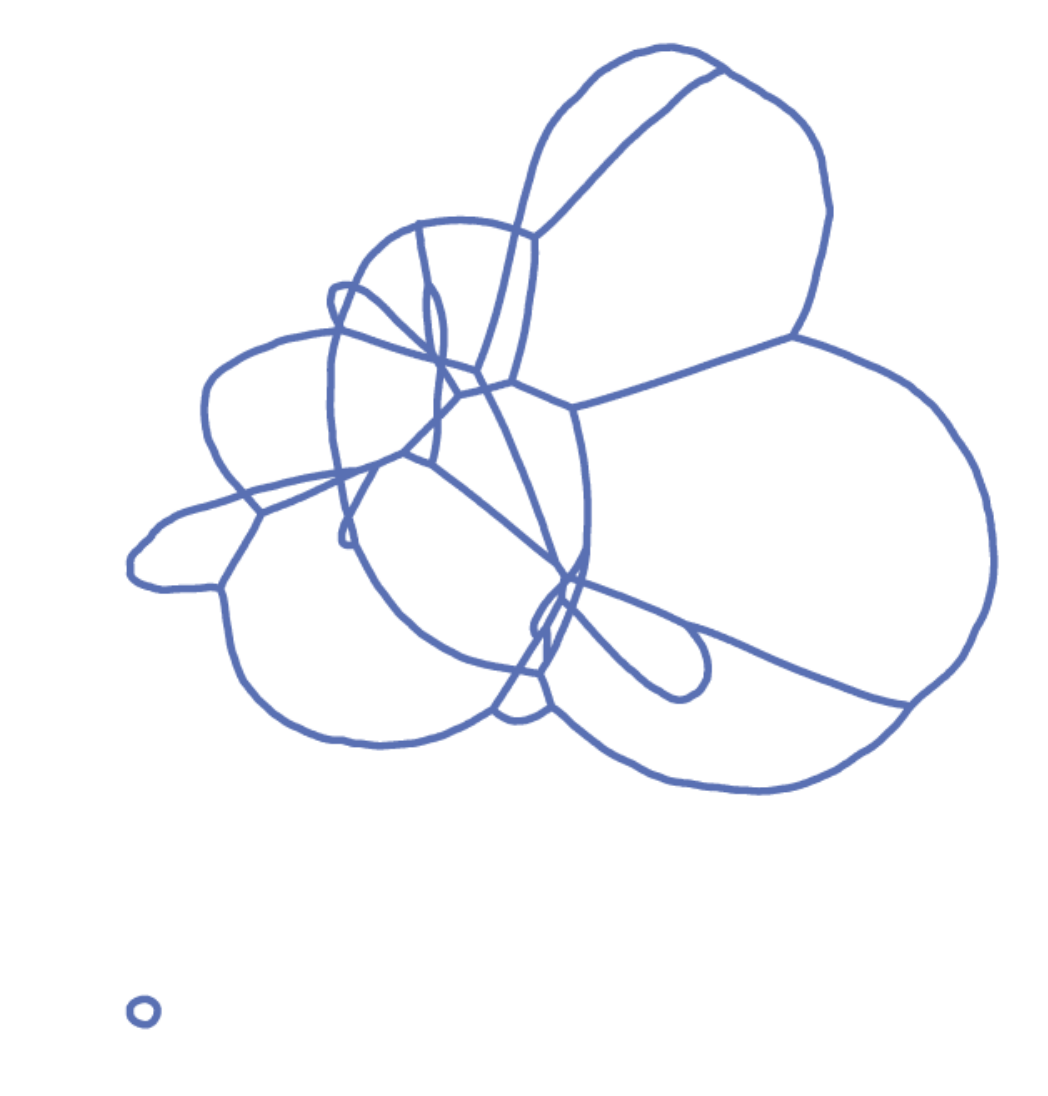}
\includegraphics[width=0.4\textwidth]{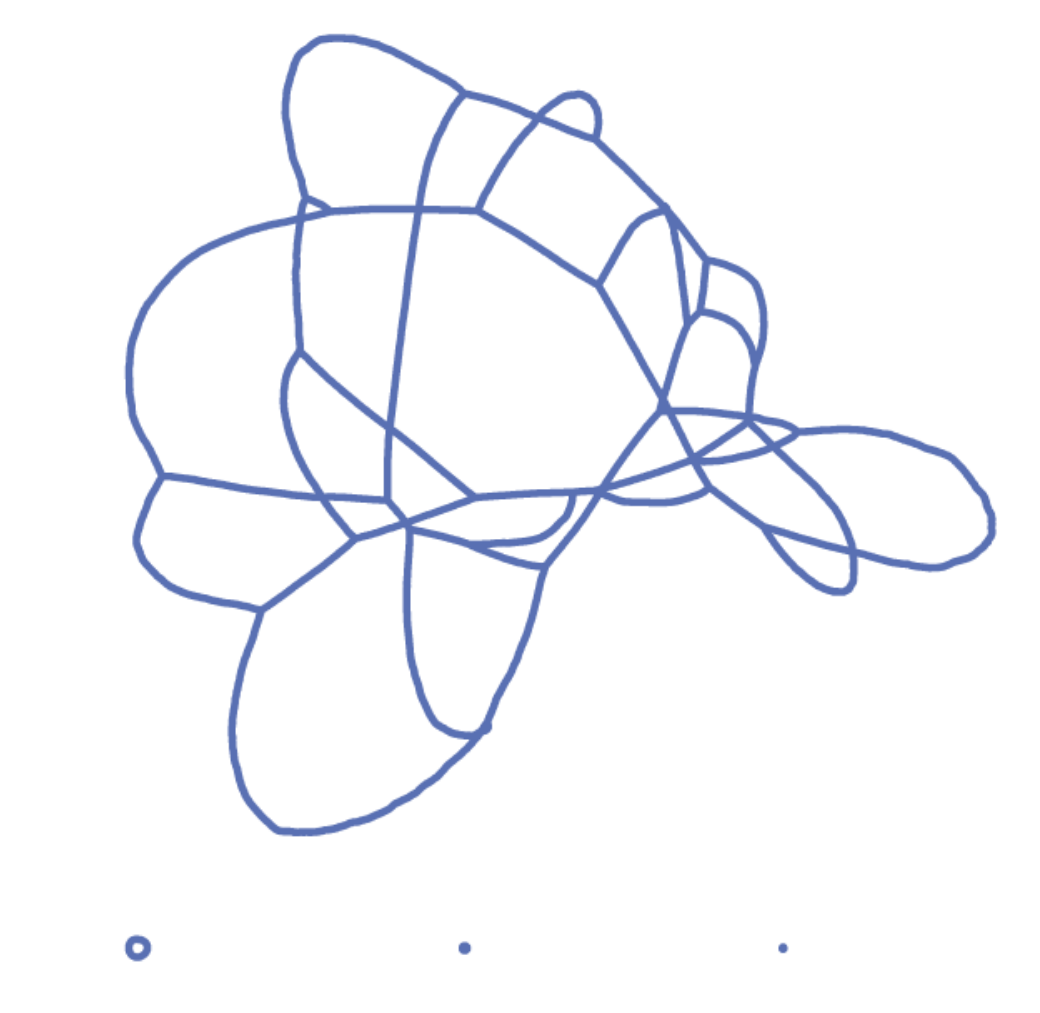}
\caption{2 samples of the Configuration model over 10000 vertices with $n_2=9970$, $n_3=30$. It is easy to observe that the graph is mostly made by long linear sections.}
\label{fig:onedime}\end{figure}
One of the possible reasons for this phenomenon is the \emph{one-dimensional nature} of the graph. We can observe that the \emph{local weak limit} of the graph is $\mathbb Z$, i.e., the neighbourhood of a uniformly chosen vertex resembles w.h.p. a straight line up to any finite distance (see Figure \ref{fig:onedime}). The reasons for which a large second component in a supercritical graph is unlikely is that large sets tend to have large boundaries, and thus are hard to separate from the giant component. In a one-dimensional model instead, connected sets with large boundaries are impossible to achieve, and thus it is relatively easy to separate even quite large components from the giant.

The lower almost-two-regular graph instead, shows clear signs of being a subcritical graph, i.e., the largest component is in no way ``special", but the $k$ largest components are almost indistinguishable for every fixed $k$. Still, also in this case we see the peculiar nature of this model, as the size of the largest component is in no way determined by the highest degree up to a constant or at most a logarithmic term, as it is in most cases, like $G(n,m)$ \cite{ErdRen59}, the Random Intersection Graph \cite{Beh07} or many instances of the Configuration Model \cite{Jan08}  but we can observe polynomially large components in a subcritical graph with highest degree equal to $2$.

\section{Proof of Theorem \ref{thm:MainUp}}

In this section, we prove the main results about the upper almost-2-regular graph, summarized in Theorem \ref{thm:MainUp}. Our proof will consist of two main steps: first, we prove that w.h.p. all the connected components except the giant are cycles (i.e. components that contain only vertices of degree 2), second, we find the distribution of the size of the largest cyclic component in the graph. The first proof uses an argument similar to the one presented by \L uczak in \cite{Luc92}, by which the configuration model after the removal of the vertices of degree $2$ is equivalent to a configuration model with a different degree sequence, the second instead will require the use of enumerative combinatorics to find the probability of having cycles of a given length. In order to understand the large-scale structure of $\CMd$, we define the notion of \emph{kernel} of the graph, adapted from \cite{Luc92}.

\begin{defi}[Kernel of the Configuration Model]
We define the kernel $K(\CMd)$ of the configuration model as the graph obtained from $\CMd$ after running the following algorithm:

\begin{itemize}
\item[\tt Initialize] Sample $\CMd$.
\item[\tt Step] At every step, choose a vertex $v_i$ such that $d_i=2$ according to any arbitrary rule and remove it from $\CMd$. 
\textbf{If} $v_i$ has a self loop, do nothing more, \textbf{else} pair the two half-edges which had been paired with the half-edges of $v_i$, even if this procedure would create a self loop or a multi-edge.
\item[\tt Terminate] Stop the process when there are no more vertices of degree $2$.
\end{itemize}
\end{defi}
From the definition, it follows that $K(\CMd)$ is a multigraph with a degree sequence $\dg'$ such that, defining $n'_i$ as the number of vertices of degree $i$ in $\dg'$, we have $n'_2=0$, $n'_i=n_i$ for all $i \neq 2$. Moreover, in the next lemma we prove that $K(\CMd)$ is distributed as $\CMdd$.

\begin{lem}Consider $\CMd$ with any arbitrary degree sequence $\dg$ and its kernel $K(\CMd)$. Then
\label{lem:KernelDistribution}
\eqn{
K(\CMd)\overset{d}{=}\CMdd,
}
where $\dg'$ is such that $n'_2=0$, $n'_i=n_i$ for all $i \neq 2$.
\end{lem}
\proof

We show that however we choose an half-edge $e_1$ in $\Lcal_{\neq 2}$, $e_1$ is paired in $K(\CMd)$ with another half-edge $e_2$ chosen uniformly in $\Lcal_{\neq 2}$. We do so through an algorithm that sequentially builds $\CMd$. 
Define the time variable of the algorithm as a pair $(i,j)$, $i\geq 1, j\geq -1$, and the sets of unexplored half-edges at any time, $\Unex(i,j)$. We describe in the following pseudocode how the algorithm can sample the edge set of $K(\CMd)$ as a set of pairs of half edges $(\{e_1(i),e_2(i)\})_{i=1}^{\elltwo/2}$, while  building $\CMd$ at the same time. 

\begin{enumerate}
\item[\tt Initialize] Set $i=1$, $j=-1$. Set $\Unex (1,-1)=\Lcal$. 
\item[\tt Step] \begin{enumerate}
\item \textbf{If} $\Lcal_{\neq 2}\cap \Unex(i,-1)=\varnothing$, pair the remaining half-edges in $\Unex(i,-1)$ uniformly at random and terminate the algorithm.  

\textbf{Else}, choose $e \in \Lcal_{\neq 2}\cap \Unex(i,-1)$ according to any arbitrary rule. Set $j=0$, $\Unex(i,0)=\Unex(i,-1) \setminus \{e_1\}$ and $e_1(i)=e$
\item Choose an half-edge $e'$ uniformly at random in $\Unex(i,j)$ and pair it with $e$ 
\item \textbf{If} $e' \in \Lcal_{\neq 2}$, then fix $e_2(i)=e'$, increase $i$ by $1$, set $j=-1$ and $\Unex(i+1,-1)=\Unex(i,j)\setminus \{e'\}$. Go back to Step (a).

 \textbf{Else}, set $e$ equal to the other half-edge $e''$  incident to the same vertex as $e'$. Increase $j$ by $1$ and set $\Unex(i,j+1)=\Unex(i,j)\setminus \{e',e''\}$. Go back to Step (b).
\end{enumerate}
\end{enumerate}

This algorithm produces the edge set of $K(\CMd)$ as the set of all the couples $\{e_1(i),e_2(i)\}$, since every time an half edge $e_1(i)$ is paired with a vertex $v$ of degree $2$, such vertex is removed, and $e_1(i)$ is paired with the other half-edge connected to $v$, exactly as in the construction of $K(\CMd)$. Moreover, every time the edge $e'$ is chosen uniformly over $\Unex(i,j)$, and  consequently, conditioning on $e' \in \Lcal_{\neq 2}$, its distribution is uniform over the set

\eqn{
\Unex(i,j) \cap  \Lcal_{\neq 2}= \Lcal_{\neq 2}\setminus \Big(\{e_1(i)\} \cup \bigcup_{h<i} \{e_1(h), e_2(h)\}\Big).
}
From this we obtain that the pairing $(\{e_1(i),e_2(i)\})_{i=1}^{\elltwo/2}$ is a uniform pairing over $\Lcal_{\neq 2}$, and thus $K(\CMd)\overset{d}{=}\CMdd$.\qed

\medskip
We next define the usual exploration process of the configuration model (see e.g. \cite[Section 1]{MolRee95}) that we are going to use multiple times in the paper.

\begin{defi}[Exploration of the Configuration Model]\label{defi:exploration} At each time $t$, we define the sets of half-edges $\{ \Acal_t , \Dcal_t , \Ncal_t \}$ (the active, dead and neutral sets), and run the following process:

\begin{itemize}
\item[{\tt Initalize}] Pick a vertex $v \in [n]$ according to any arbitrary (deterministic or stochastic) rule and we set all its half-edges as active. Set all the other half-edges as neutral.
\item[{\tt Step}] At each step $t$, pick a half-edge $e_1(t) $ in $\Acal_t$ uniformly at random, and pair it with another half-edge $e_2(t)$ chosen uniformly at random in $\Acal_t \cup \Ncal_t$. Set $e_1(t), e_2(t)$ as dead.\\
\textbf{If} $e_2(t) \in \Ncal_t$, then find the vertex $v(e_2(t))$ incident to $e_2(t)$ and activate all its other half-edges.
\item[{\tt Terminate}] If $\Acal_t=\varnothing$, terminate the process.
\end{itemize}
\end{defi}

 We use this algorithm to compute the number of vertices that are contained in cyclic components in the almost-two-regular graph:
\begin{prop}\label{prop:cycletot}Consider a sequence satisfying Condition \ref{2-up} or \ref{2-lo}. Define $\mathbf C(n)$ as the number of vertices of $\CMd$ in cyclic components. Then
\begin{equation}
\lim_{t \to \infty}\lim_{n \to \infty} \P(\mathbf C(n) \geq tn_2/\elltwo)=0.
\end{equation}
\end{prop}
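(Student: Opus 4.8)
The plan is to control the first moment of $\mathbf C(n)$ and then conclude by Markov's inequality. Writing
$\mathbf C(n)=\sum_{i:\,d_i=2}\indic\{v_i\text{ lies in a cyclic component}\}$
and using the exchangeability of the uniform half-edge pairing, by linearity it suffices to show that a fixed vertex $v$ of degree $2$ satisfies $\P(v\text{ in a cyclic component})=O(1/\elltwo)$. This gives $\E[\mathbf C(n)]=O(n_2/\elltwo)$, and then
$\P(\mathbf C(n)\ge t\,n_2/\elltwo)\le \E[\mathbf C(n)]\big/(t\,n_2/\elltwo)\le C/t$,
which tends to $0$ upon letting first $n\to\infty$ and then $t\to\infty$.

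To estimate $\P(v\text{ in a cyclic component})$ I would run the exploration of Definition \ref{defi:exploration} started from $v$; it reveals exactly the component of $v$ and halts when $\Acal_t=\varnothing$. Since every vertex of a cyclic component has degree $2$, the component of $v$ is a cycle if and only if the exploration \emph{closes up}---i.e.\ the partner $e_2(t)$ is the unique other active half-edge---before it ever pairs with a neutral half-edge attached to a vertex of degree different from $2$ (a ``bad'' half-edge). The crucial point is an invariant maintained up to the first of these two events: as long as neither has occurred, each step pairs an active half-edge with a degree-$2$ neutral half-edge, killing one active half-edge and activating exactly one new one, so $|\Acal_t|\equiv 2$; moreover no bad half-edge has been touched, so the number of neutral bad half-edges stays equal to $\elltwo$ at every step, while the number of half-edges whose selection closes the cycle is always exactly $1$.

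Consequently, conditionally on the process not yet having stopped, at each step the probability of closing and the probability of hitting a bad half-edge are proportional to $1$ and $\elltwo$ respectively, with the common ($t$-dependent) factor $1/(|\Acal_t\cup\Ncal_t|-1)$ cancelling in the ratio. Since the exploration halts after finitely many steps and must do so either by closing or by hitting a bad half-edge (once the degree-$2$ neutral half-edges run out, only those two outcomes remain), the two terminating outcomes compete with constant relative odds $1:\elltwo$ throughout. Summing $\P(\text{close at step }m)=\elltwo^{-1}\P(\text{bad at step }m)$ over $m$ and using $\P(\text{close})+\P(\text{bad})=1$ yields $\P(v\text{ in a cyclic component})=1/(1+\elltwo)$ exactly, whence $\E[\mathbf C(n)]=n_2/(1+\elltwo)$ and the Markov bound closes the argument. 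Note that the reasoning only distinguishes degree $2$ from degree $\neq 2$, so it applies verbatim under both Condition \ref{2-up} and Condition \ref{2-lo}.

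I expect the main obstacle to be the careful bookkeeping behind the invariant---verifying that $|\Acal_t|=2$ and that the pool of bad half-edges is left untouched at \emph{every} step up to the stopping time---together with the clean justification that constant relative odds $1:\elltwo$ force the closing probability to equal $1/(1+\elltwo)$. A slightly more computational alternative avoids the exploration entirely: a direct first-moment count gives, for cyclic components spanning $k$ degree-$2$ vertices, $\E[\#\{k\text{-cycles}\}]=\tfrac{2^{k-1}(n_2)_k}{k}\prod_{i=1}^{k}(\ell-2i+1)^{-1}$, and summing $k$ times this over $k\ge 1$ and majorising the resulting series by the geometric series with ratio attained at $k=1$ again produces the order $n_2/\elltwo$.
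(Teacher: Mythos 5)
Your proposal is correct and follows essentially the same route as the paper: the paper also runs the exploration from a uniform degree-$2$ vertex, uses the invariant $|\Acal_t|=2$ with $\elltwo$ untouched bad half-edges to get the constant $1:\elltwo$ odds between closing and hitting $\Lcal_{\neq 2}$, concludes $\P(v\text{ in a cyclic component})=1/(\elltwo+1)$, hence $\E[\mathbf C(n)]=n_2/(\elltwo+1)$, and finishes with the first moment method. No gaps.
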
\proof
We run the exploration process described in Definition \ref{defi:exploration} starting from a uniformly chosen vertex of degree $2$. We run the process until the first of the two following stopping times:

\eqn{
T_{\neq 2}= \min\{t: e_2(t) \in \Lcal_{\neq 2}\}, \qquad T_C=\min\{t:e_2(t) \in \Acal_t\}.
}
It is impossible that $T_{\neq 2}=T_C$ because for every $t\leq T_{\neq 2}$, $\Acal_t \cap \Lcal_{\neq 2}=\varnothing$. Moreover, $v$ is in a cyclic component,  if and only if $T_C<T_{\neq 2}$.
We know that for every $t\leq \min\{T_{\neq 2},T_C\}$, $|\Acal_t|=2$ and $|\Lcal_{\neq 2} \cap \Ncal_t|=\elltwo$.  Consequently we have, for every $t\geq 1$

\eqn{
\P(T_{\neq 2}=t\mid  \min\{T_{\neq 2},T_C\}>t-1)=\elltwo\P(T_{C}=t\mid  \min\{T_{\neq 2},T_C\}>t-1),
}
so that
\eqn{
\P(T_{\neq 2}=t, T_C>T_{\neq 2})=\elltwo\P(T_C=t,T_{\neq 2}>T_C),
}
and thus
\eqan{
\nn\P(T_{\neq 2}>T_C)&=\sum_{t\geq 1}\P(\min\{T_{\neq 2},T_C\}=t) \P(T_{\neq 2}>T_C\mid \min\{T_{\neq 2},T_C\}=t)\\&=\sum_{t \geq 1}  \P(\min\{T_{\neq 2},T_C\}=t)\dfrac{1}{\elltwo+1}=\dfrac{1}{\elltwo+1}.
}
Consequently,
\begin{equation}
\E[\mathbf C(n)]=\sum_{v \in \Ncal_2}\P(v \text{ is in a cyclic component})=n_2 \P(T_{\neq 2}>T_C)=\dfrac{n_2}{\elltwo+1}.
\end{equation}
We thus write, by the first moment method,

\begin{equation}\begin{split}
\P(\mathbf C(n)>tn_2/\elltwo) \leq\dfrac{n_2}{\elltwo+1}\dfrac{\elltwo}{t n_2}=\dfrac{1}{t}+o(1),
\end{split}\end{equation}
and the claim follows.\qed

\medskip
We can now prove that under Condition \ref{2-up}, the largest component contains w.h.p. all the vertices of degree at least $3$, and is of size $n-\op(n)$. 

\begin{prop}Consider a sequence $\CMd$ that satisfies Condition \ref{2-up}. Then as $n \to \infty$,
\label{prop:ProofofMain1}
\begin{equation}
\lim_{t \to \infty}\lim_{n \to \infty} \P(n-|\Cmax| \geq tn_2/\elltwo)=0.
\end{equation}

Moreover, $[n] \setminus \Cmax$ contains only vertices of degree $2$ w.h.p..
\end{prop}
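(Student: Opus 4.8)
The plan is to reduce to the kernel, apply a classical connectivity theorem, and then read off the size bound directly from Proposition \ref{prop:cycletot}. The structural fact driving everything is that a connected component of $\CMd$ containing no vertex of degree different from $2$ is necessarily a single cycle; thus every component either is a cycle of degree-$2$ vertices or contains at least one vertex of degree $\geq 3$, equivalently at least one vertex of the kernel, since $n_0=n_1=0$ under Condition \ref{2-up}.

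First I would invoke Lemma \ref{lem:KernelDistribution} to write $K(\CMd)\overset{d}{=}\CMdd$, where $\dg'$ has minimum degree at least $3$. Since $\ell/n\to 2$, both the number of kernel vertices $n-n_2$ and $\elltwo$ are $o(n)$. The core step is to show that the kernel is connected w.h.p.; for this I would appeal to the classical result of Bollob\'as \cite{Boll01} and Wormald \cite{Wor81}, refined by \L uczak \cite{Luc92}, that a configuration model of minimum degree at least $3$ is connected w.h.p. Translating back, two vertices of degree $\geq 3$ lie in the same component of $\CMd$ precisely when they are joined in the kernel, so connectivity of the kernel forces all $n-n_2$ vertices of degree $\geq 3$ into a single component $\cluster^\ast$ of $\CMd$, while every other component is a cycle. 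This is exactly the ``moreover'' assertion.

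It then remains to identify $\cluster^\ast$ with $\Cmax$ and to bound its complement. By Proposition \ref{prop:cycletot} the number of vertices in cyclic components is $\mathbf C(n)=\Op(n_2/\elltwo)=\op(n)$, hence $|\cluster^\ast|=n-\mathbf C(n)=n-\op(n)$, whereas each cyclic component has at most $\mathbf C(n)=\op(n)$ vertices. For large $n$ this makes $\cluster^\ast$ the unique largest component, so $\cluster^\ast=\Cmax$ and
\[
n-|\Cmax|=\mathbf C(n).
\]
The tightness statement $\lim_{t\to\infty}\lim_{n\to\infty}\P(n-|\Cmax|\geq tn_2/\elltwo)=0$ is then precisely Proposition \ref{prop:cycletot}.

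The main obstacle is the connectivity step. The standard theorems are typically stated for a number of vertices tending to infinity and for degrees that are not too large, whereas here $\dg'$ is essentially arbitrary subject only to minimum degree $3$. In particular, if the excess degree is concentrated on a few vertices then $n-n_2$ need not diverge, and the cited results must be supplemented by a direct argument (a bounded collection of vertices, at least one of diverging degree, is connected w.h.p.\ because each pair has a diverging number of pairing opportunities). Checking that the connectivity hypotheses can be met, and patching these degenerate regimes uniformly over the degree sequences allowed by Condition \ref{2-up}, is where the real work lies; given connectivity of the kernel, the identification of $\Cmax$ and the final bound are immediate from Proposition \ref{prop:cycletot}.
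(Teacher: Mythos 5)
Your proposal is correct and follows essentially the same route as the paper: reduce to the kernel via Lemma \ref{lem:KernelDistribution}, invoke the Wormald--type connectivity result for minimum degree $3$, observe that all non-giant components are cycles, and conclude with Proposition \ref{prop:cycletot}. Your closing caveat about verifying the hypotheses of the cited connectivity theorem (e.g.\ when the kernel has boundedly many vertices) is a reasonable point of extra care that the paper itself glosses over, but it does not change the argument.
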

\proof
Every two vertices $v,w$ such that $d_v,d_w\neq 2$ are in the same component in $\CMd$ if and only if they are in the same component in $K(\CMd)$. Thus, if $K(\CMd)$ is connected, there exists a component $\tilde \cluster$ of $\CMd$ such that   $[n] \setminus \tilde \cluster$ contains only vertices of degree $2$. By Lemma \ref{lem:KernelDistribution}, $K(\CMd)$ is distributed as $\CMdd$, where $\dg'$ has minimum degree at least $3$. Consequently, $ \CMdd$ is connected w.h.p. by the main theorem of \cite{Wor81}.
 We thus write
\begin{equation}\begin{split}
\P(n-|\tilde \cluster|>tn_2/\elltwo) &\leq \P(\mathbf C(n)> tn_2/\elltwo)+\P(\Ncal_2 \cup \tilde \cluster \neq [n]) =\dfrac{1}{t}+o(1),
\end{split}\end{equation}
from this follows that w.h.p. $\tilde \cluster = \Cmax$ and so the claim follows.\qed

\medskip

This is enough to complete the proof of \eqref{main:first}. In order to prove \eqref{main:second}, we need to analyze the distribution of the number and size of the cyclic components.

From the previous analysis we see that w.h.p. probability all the components outside the giant are cycles, and that 
\begin{equation}
\sharp \text{vertices in cycles}= \Op\Big(\dfrac{n_2}{\elltwo}\Big).
\end{equation}

We know that if we remove the giant from a configuration model, the rest of the graph is distributed as a configuration model with the degree sequence of the remaining vertices. By Proposition \ref{prop:ProofofMain1} in this case what is left is w.h.p. a random $2$-regular graph.

 In a random $2$-regular graph, we know that $|\Cmax|/n$ converges in distribution to a random variable with no points with positive mass. Thus, we expect the second largest component of $\CMd$ (i.e. w.h.p. its largest cyclic component) to be of size $\Op \big(\frac{n_2}{\elltwo}\big)$ and to have a random size. To prove this we analyse the distribution of the number of cyclic components that are larger than $an_2/\elltwo$ for any fixed constant $a\in ( 0, \infty)$.

Define $C_n(k)$ as the number of cyclic components of size $k$ in $\CMd$. We will next analyse the process

\begin{equation}\label{ProcessDefinition}
S_n^{(a)}(k):=\left\{\begin{split} 0 \qquad \text{ if } k\leq an_2/\elltwo,\\
\sum_{j=an_2/\elltwo}^{k}C_n(j) \quad \text{if } k>an_2/\elltwo.
\end{split}\right.
\end{equation} 
In particular we are interested in proving the following proposition, from which we can deduce \eqref{main:second}:

\begin{prop}\label{prop:poiconv}Consider a sequence $\CMd$ satisfying $\elltwo/\ell \to 0$ and the associated processes $(S_n^{(a)}(k))_{k>0}$ as defined in \eqref{ProcessDefinition}. Then, for every $a<t<\infty$

\begin{equation}
S_n^{(a)}(tn_2/\elltwo)\dto \Poi\Big( \int_a^t \lambda (r)dr\Big),
\end{equation}
where $\lambda (t)=\dfrac{e^{-2t}}{2t}$.

\end{prop}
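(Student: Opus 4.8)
The plan is to prove the Poisson limit by the method of factorial moments. Set $X_n := S_n^{(a)}(tn_2/\elltwo)$, so that $X_n$ counts the cyclic components of $\CMd$ whose size lies in the window $R_n := \{k\in\mathbb N : an_2/\elltwo \le k \le tn_2/\elltwo\}$, and write $\mu := \int_a^t \lambda(r)\,dr$. Since $\Poi(\mu)$ is moment-determinate and has $m$-th factorial moment equal to $\mu^m$, it suffices to show that for every fixed $m\ge 1$,
\[
\E\big[(X_n)_m\big] := \E\big[X_n(X_n-1)\cdots(X_n-m+1)\big] \longrightarrow \mu^m .
\]
I would reduce everything to an exact enumeration of the expected number of (ordered tuples of disjoint) cyclic components, using the observation already exploited in Proposition \ref{prop:cycletot}: a cyclic component of size $k$ is a cycle through $k$ vertices of degree $2$ that exhausts all of their half-edges.

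\emph{First moment.} Let $C_n(k)$ be the number of cyclic components of size $k$. Choosing the $k$ degree-$2$ vertices, their cyclic order, and the half-edge pairings realizing the cycle, and multiplying by the probability that these $k$ pairings are present in the uniform matching of the $\ell$ half-edges, gives the exact identity
\[
\E[C_n(k)] = \frac{2^{k-1}\, n_2(n_2-1)\cdots(n_2-k+1)}{k\,(\ell-1)(\ell-3)\cdots(\ell-2k+1)} = \frac{1}{2k}\prod_{i=0}^{k-1}\frac{2(n_2-i)}{\ell-2i-1}.
\]
Using $\ell = 2n_2+\elltwo$ one has $\frac{2(n_2-i)}{\ell-2i-1} = \big(1+\tfrac{\elltwo-1}{2(n_2-i)}\big)^{-1}$, and since $\elltwo/\ell\to0$ (equivalently $\elltwo=o(n_2)$ and $n_2/\ell\to\tfrac12$), a uniform expansion of the logarithm over the range $k=O(n_2/\elltwo)$ yields $\prod_{i=0}^{k-1}\frac{2(n_2-i)}{\ell-2i-1} = \exp\!\big(-c\,k\elltwo/n_2 + o(1)\big)$ for an explicit constant $c>0$ dictated by $n_2/\ell\to\tfrac12$. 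Hence at $k = rn_2/\elltwo$ one obtains $\E[C_n(k)] = \frac{\elltwo}{n_2}\,\lambda(r)\,(1+o(1))$ with $\lambda$ as in the statement, and summing over $k\in R_n$ — a Riemann sum of mesh $\elltwo/n_2\to0$ — gives $\E[X_n] = \sum_{k\in R_n}\E[C_n(k)] \to \int_a^t\lambda(r)\,dr = \mu$. Note $\max_{k\in R_n}\E[C_n(k)] = O(\elltwo/n_2)\to0$, so no single size contributes an atom.

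\emph{Higher factorial moments.} Since distinct components are automatically vertex-disjoint, $(X_n)_m$ equals the number of ordered $m$-tuples of disjoint cyclic components with sizes in $R_n$, whence
\[
\E\big[(X_n)_m\big] = \sum_{k_1,\dots,k_m\in R_n}\Big(\prod_{j=1}^m\frac{2^{k_j-1}}{k_j}\Big)\,\frac{n_2(n_2-1)\cdots(n_2-K+1)}{(\ell-1)(\ell-3)\cdots(\ell-2K+1)},\qquad K:=\sum_{j=1}^m k_j ,
\]
the single falling factorial in the numerator arising because the $m$ vertex sets are chosen successively from the remaining degree-$2$ vertices (this is the telescoping). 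Because $K\le m\,t\,n_2/\elltwo = o(n_2)$, the same uniform estimate applies to $\prod_{i=0}^{K-1}\frac{2(n_2-i)}{\ell-2i-1}$, and its exponent $-c\,K\elltwo/n_2$ is \emph{additive} in the $k_j$; the summand therefore factorizes as $\prod_{j=1}^m \E[C_n(k_j)]\,(1+o(1))$ uniformly over the summation range. Consequently $\E[(X_n)_m] = \big(\sum_{k\in R_n}\E[C_n(k)]\big)^m(1+o(1)) = \E[X_n]^m(1+o(1)) \to \mu^m$, which is exactly the required criterion.

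\emph{Main obstacle.} The delicate point is not the algebra but the \emph{uniformity} of the asymptotics: one must show that $\log\prod_{i=0}^{K-1}\frac{2(n_2-i)}{\ell-2i-1} = -c\,K\elltwo/n_2 + o(1)$ with an error tending to $0$ even though $K$ itself grows like $n_2/\elltwo$. Replacing $\sum_i \frac{1}{n_2-i}$ by $K/n_2$ incurs a correction of order $\elltwo K^2/n_2^2 = O(1/\elltwo)$, so the clean intensity $\lambda$ genuinely requires $\elltwo\to\infty$ (as holds under Conditions \ref{2-up} and \ref{2-lo}); the second-order contributions $\sum_i O\big((\elltwo/n_2)^2\big) = O(\elltwo/n_2)$ vanish already from $\elltwo/\ell\to0$. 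Granting this uniform control, the factorization of the $m$-fold sum loses only a $(1+o(1))$ factor after summation, and the remaining routine steps are the Riemann-sum convergence to $\int_a^t\lambda$ (the integrand being integrable on $[a,t]$ since $a>0$) and the invocation of the standard factorial-moment criterion for convergence to a Poisson law.
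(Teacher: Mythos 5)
Your proposal is correct and follows essentially the same route as the paper: the method of factorial moments, an exact enumeration of ordered tuples of disjoint cycles via the telescoping product $\prod_{i=0}^{K-1}\frac{2(n_2-i)}{\ell-2i-1}$ (the paper organizes this as successive conditional expectations in a reduced configuration model, which is the same computation), uniform asymptotics over the window $k=\Theta(n_2/\elltwo)$, and a Riemann-sum passage to $\int_a^t\lambda(r)\,dr$. The one place to be careful is the explicit constant you call $c$: writing $\ell=2n_2+\elltwo$ gives $\prod_{i}\frac{2(n_2-i)}{\ell-2i-1}=\exp\{-k\elltwo/(2n_2)(1+o(1))\}$, and you should verify that this exponent actually matches the stated intensity $\lambda(r)=\e^{-2r}/(2r)$ under the substitution $k=rn_2/\elltwo$ — the paper's own derivation performs exactly this bookkeeping step, so your argument is no less complete than the original on this point.
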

\proof
We prove the result through the method of moment. We define the factorial moments
\begin{equation}
(S_n^{(a)}(tn_2/\elltwo))_h=S_n^{(a)}(tn_2/\elltwo)\big(S_n^{(a)}(tn_2/\elltwo)-1\big)\cdots \big(S_n^{(a)}(tn_2/\elltwo)-h+1\big),
\end{equation}
to prove the Poisson convergence we need to prove that
\begin{equation}
\E[(S_n^{(a)}(tn_2/\elltwo))_h]\to \Big( \int_a^t \lambda (r)dr\Big)^h,
\end{equation}
for every $t>a$; $h \in \mathbb N$ (see e.g. \cite[Section 2.1]{Hofs17}).
We write
\begin{equation}
\E[(S_n^{(a)}(tn_2/\elltwo))_h]= \ddsum_{j_1,...,j_h\in \{an_2/\elltwo;tn_2/\elltwo\}}\E[C(j_1)C(j_2)\cdots C(j_h)].
\end{equation}
 We need to prove that

\begin{equation}\begin{split}\label{convexp}
\max_{j_1,...,j_h\in \{an_2/\elltwo;tn_2/\elltwo\}}&\E[C(j_1)C(j_2)\cdots C(j_h)]\Big(\dfrac{n_2}{\elltwo}\Big)^h\\&-\lambda(j_1n_2/\elltwo)\lambda(j_2n_2/\elltwo)\cdots \lambda(j_hn_2/\elltwo)\to 0
\end{split}\end{equation}
We define, for every $j \leq n_2$,
\eqn{
\Ccal (j):={\Ncal_2 \choose j},
}
 i.e., $\Ccal (j)$ is the set of all possible sets of vertices that can be arranged in a cycle of length $j$. We can thus write

\eqan{\label{eq:MultivariateMoment}
\E[C&(j_1)C(j_2)\cdots C(j_h)]\nonumber\\&=\sum_{A_1\in \Ccal(j_1)}\cdots\sum_{A_h\in \Ccal(j_h)}\E[\indi_{C_{A_1}}\cdots\indi_{C_{A_h}}]\\&=\sum_{A_1\in \Ccal(j_1)}\cdots\sum_{A_{h-1}\in \Ccal(j_{h-1})}\E[\indi_{C_{A_1}}\cdots\indi_{C_{A_{h-1}}} ]\sum_{A_{h}\in \Ccal(j_{h})}\E[\indi_{C_{A_h}}\mid C_{A_1},\dots,C_{A_{h-1}}].\nonumber
}
We note immediately that, if there exist $i, l \leq h$ such that $A_i\cap A_l\neq \varnothing$, then $\indi_{C_{A_i}}\indi_{C_{A_l}} =0$ deterministically. Consequently, we can restrict the sum to the case in which the sets $A_1,...,A_h$ are all mutually disjoint.
Define $m_i:=\sum_{l=1}^{i-1}j_l$, that is, as the number of vertices used to create the first $i-1$ cycles, all of which have degree $2$.
We note that for all $1\leq i \leq h$, as long as $A_{i}$ does not overlap with any of the previous sets, $\E[\indi_{C_{A_i}}\mid C_{A_1},\dots,C_{A_{i-1}}]$ is the probability that the vertices in $A_i$ form a cyclic component in a configuration model $\CMdt$, with $\tilde\dg$ satisfying $\tilde n_2=n_2-m_i$ and $\tilde n_l=n_l$ for all $l\neq 2$, where $\tilde d_l$ is the number of vertices of degree $l$ in $\tilde \dg$. We further note that $\tilde\dg$ does not depend on the precise choice of $A_1,...,A_{i-1}$, as long as $A_l \in \Ccal (j_l)$ for every $l$.
 Thus, for all possible disjoint collections $A_1,...,A_h$, for every $i\leq h$,
\begin{equation}\begin{split}
\E[&\indi_{C_{A_i}}\mid C_{A_1},\dots,C_{A_{i-1}}]\\&=\dfrac{2j_i-2}{\ell-2m_i-1}\dfrac{2j_i-4}{\ell-2m_i-3}\cdots\dfrac{2}{\ell-2m_i-2k_i+3}\dfrac{1}{\ell-2m_i-2k_i+1}
\\&=2^{j_i-1}(j_i-1)!\prod_{l=0}^{j_i-1}\dfrac{1}{\ell-2m_i-2l-1}
\end{split}\end{equation}
There exists ${n_2-m_i \choose j_i}$ distinct choices for $A_i$ that do not overlap with any $A_l$ for any $l<i$, so we obtain
\begin{equation}\label{eq:prodex}\begin{split}
\sum_{A_i\in \Ccal(j_i)} \E[\indi_{C_{A_i}}\mid C_{A_1},\dots,C_{A_{i-1}}]&={n_2-m_i \choose j_i}2^{j_i-1}(j_i-1)!\prod_{g=0}^{j_i-1}\dfrac{1}{\ell-2m_i-2g-1}\\
&=\dfrac{1}{2j_i}\prod_{g=0}^{j_i-1}\dfrac{2(n_2-m_i-g)}{\ell-2m_i-2g-1}
\end{split}\end{equation}

We use that, for every $g\geq 0$,

\begin{equation}
\dfrac{2(n_2-m_i-g)}{\ell-2m_i-2g-1}\leq \dfrac{2n_2}{\ell-1}.
\end{equation}
For a lower bound we write, recalling that $m_i+g\leq h t\ell/\elltwo$,
\begin{equation}\begin{split}
 \dfrac{2n_2}{\ell}-\dfrac{2(n_2-m_i-g)}{\ell-2m_i-2g-1}&\leq  \dfrac{2n_2}{\ell}-\dfrac{2n_2-2(m_i+g+1))}{\ell-2(m_i+g+1)}\\&=\dfrac{\ell(2n_2-2(m_i+g+1))-2n_2(\ell-2(m_i+g+1))}{\ell(\ell-2(m_i+g+1))}\\
 &=\dfrac{2(m_i+g+1)(\ell-2n_2)}{\ell^2(1-o(1))}\leq \frac{ht}{\ell}.
\end{split}\end{equation}
Thus we obtain that 
\begin{equation}\begin{split}
\prod_{g=0}^{j_i-1}\dfrac{2(n_2-m_i-h)}{\ell-2m_i-2g-1}&\geq \Big(\dfrac{2n_2-ht}{\ell}\Big)^{j_i-1}=\Big(1-\dfrac{\elltwo+ht}{\ell}\Big)^{j_i-1}\\&=\Big(1-\dfrac{\elltwo+ht}{\ell}\Big)^{\frac{\ell}{\elltwo+ht}\frac{(j_i-1)(\elltwo+ht)}{\ell}},
\end{split}\end{equation}
and
\begin{equation}\begin{split}
\prod_{g=0}^{j_i-1}\dfrac{2(n_2-m_i-h)}{\ell-2m_i-2g-1}&\leq \Big(\dfrac{2n_2}{\ell-1}\Big)^{j_i-1}=\Big(1-\dfrac{\elltwo}{\ell-1}\Big)^{j_i-1}\\&=\Big(1-\dfrac{\elltwo}{\ell-1}\Big)^{\frac{\ell-1}{\elltwo}\frac{(j_i-1)\elltwo}{\ell-1}}.
\end{split}\end{equation}
We thus write
\eqan{
- \frac{(j_i-1)\elltwo}{\ell-1}(1+o(1))&\leq \log \big( 2j_i\E[\indi_{C_{A_i}}\mid C_{A_1},\dots,C_{A_{i-1}}]\big) \\&\leq-\frac{(j_i-1)(\elltwo+ht)}{\ell}(1+o(1)),
}
where the $o(1)$ term is independent of $j_i$. We can thus conclude that
\eqan{
\max_{j_i \in \{an_2/\elltwo;tn_2/\elltwo\}}\max_{A_1,...,A_{i-1}}\Big| 2j_i\E[\indi_{C_{A_i}}\mid C_{A_1},\dots,C_{A_{i-1}}]-\e^{-j_i\elltwo/\ell}\Big|\to 0
}
Recall that 
\eqn{
\dfrac{\e^{-j_i\elltwo/(2n_2)}}{2j_i}=\lambda(j_i\elltwo/n_2)\frac{\elltwo}{n_2}
.}
We thus obtain, 
\eqn{\label{eq:unicon}
\max_{j_i \in \{an_2/\elltwo;tn_2/\elltwo\}}\max_{A_1,...,A_{i-1}}\Big|\E[\indi_{C_{A_i}}\mid C_{A_1},\dots,C_{A_{i-1}}]\frac{n_2}{\elltwo}-\lambda(j_i\elltwo/n_2)\Big| \to 0.
}
Iterating the substitution of \eqref{eq:unicon} in \eqref{eq:MultivariateMoment} for all the other $j_i, i<h$, we obtain \eqref{convexp}.
By the uniform convergence we obtain
\eqan{
\ddsum_{j_1,...,j_h\in \{an_2/\elltwo;tn_2/\elltwo\}}&\E[C(j_1)C(j_2)\cdots C(j_h)]\\
=&\Big(\frac{\elltwo}{n_2}\Big)^h(1+o(1))\ddsum_{j_1,...,j_h\in \{an_2/\elltwo;tn_2/\elltwo\}} \lambda(j_1\elltwo/n_2)\cdots\lambda(j_i\elltwo/n_2)\nonumber\\
\nonumber \\ \to& \Big( \int_a^t \lambda (r)dr\Big)^h.\nonumber
}
This, by the method of moments, yields the claim.
\qed
\medskip

We now put together the results obtained in this section to prove Theorem \ref{thm:MainUp}

\begin{proof}[Proof of Theorem \ref{thm:MainUp}]

Proposition \ref{prop:ProofofMain1} directly implies \eqref{main:first}, so what we are left to prove is \eqref{main:second}.

Define $\Cmaxc$ as the largest cyclic component. By Proposition \ref{prop:ProofofMain1}

\eqn{\label{eq:SecondCyclic}
\lim_{n \to \infty}\P(\Cmaxc=\cluster_{2})=1.
}
By Proposition \ref{prop:poiconv}, for any $a\in (0,\infty)$, 
\eqan{\label{eq:PoiZero}
\nonumber\lim_{n \to \infty}\P(|\Cmaxc|\leq an_2/\elltwo)&\leq \lim_{t \to \infty}\lim_{n \to \infty} \P(S_n^{(a)}(tn_2/\elltwo)=0)\\&=\lim_{t \to \infty}  \P\Big(\Poi\Big(\int_{a}^t\lambda(r)dr\Big)=0\Big)\\&=\exp\Big\{-\int_a^\infty \frac{\e^{-2r}}{2r}dr\Big\}.\nonumber
}
For the matching lower bound we write, using Proposition \ref{prop:cycletot},

\eqan{\label{eq:PoiZero}
\lim_{n \to \infty}\P(|\Cmaxc|\leq an_2/\elltwo)&\geq \lim_{t \to \infty}\lim_{n \to \infty} \P(S_n^{(a)}(tn_2/\elltwo)=0) \nonumber\\&\qquad-\lim_{t \to \infty}\lim_{n \to \infty} \P(\mathbf C(n)\geq t(n_2/\elltwo))\\&=\lim_{t \to \infty} \P\Big(\Poi\Big(\int_{a}^t\lambda(r)dr\Big)=0\Big)+o(1)\nonumber\\&=\exp\Big\{-\int_a^\infty \frac{\e^{-2r}}{2r }dr\Big\}.\nonumber
}
Combining \eqref{eq:SecondCyclic} and \eqref{eq:PoiZero} and the fact that $n_2=n(1-o(1))$, the claim follows.
\end{proof}

\section{Proof of Theorem \ref{thm:MainLow}}

In this section we analyze the lower almost-2-regular graph, and prove Theorem \ref{thm:MainLow}. Since Conditions \ref{2-lo} allow only for vertices of degree $1$ and $2$, we know that all the components in $\CMd$ are either lines (components made of $2$ vertices of degree $1$ connected by vertices of degree $2$) or cycles, if we consider vertices of degree $2$ with a self loop cycles of length $1$ and double edges between vertices of degree $2$ cycles of length $2$.
Thus

\eqn{
|\Cmax|= \max\{ |\Cmaxc|,|\Cmaxl|\},
}
where $\Cmaxc$ is the largest cyclic component and $\Cmaxl$ is the largest line component. Moreover, define $\Cjl$ as the $j$-th largest line component.
%
%
%

We next prove that the size of the largest line components concentrates and it is w.h.p. larger than that of $\Cmaxc$, which we can estimate using Proposition \ref{prop:cycletot}.

\begin{lem}\label{lem:Cmaxl}Consider $\CMd$ satisfying Conditions \ref{2-lo}. Then, for every $j \in \mathbb N$,

\eqn{
\dfrac{n_1|\Cjl| }{n \log n_1} \pto 2.
}
\end{lem}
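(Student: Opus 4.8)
The plan is to reduce the statement about line components to an extreme-value statement for a family of nearly independent, geometrically distributed lengths. Under Condition~\ref{2-lo} the only degrees present are $1$ and $2$, so $n_2=n-n_1$, $\ell=2n-n_1$, and there are exactly $n_1/2$ line components, each with two degree-$1$ endpoints joined by a chain of degree-$2$ vertices. A line traced from a fixed degree-$1$ vertex behaves like a sequence of pairings that continues whenever it meets a degree-$2$ half-edge and stops when it meets a degree-$1$ half-edge. Since the typical line should then contain $\Theta(n/n_1)$ vertices and there are $\Theta(n_1)$ of them, the largest should be a $\log n_1$ factor larger, i.e. of order $\tfrac{2n}{n_1}\log n_1$, which is exactly the claimed scaling.

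First I would make the exploration precise. Running Definition~\ref{defi:exploration} from a degree-$1$ vertex, let $S$ be the number of internal degree-$2$ vertices of the resulting line. At the step extending the line after $t$ degree-$2$ vertices have been found, $\ell-2t$ half-edges are unpaired, of which $n_1-1$ are attached to degree-$1$ vertices, so the line continues with probability $\tfrac{2(n_2-t)}{\ell-1-2t}$. Hence
\[
\P(S\geq s)=\prod_{t=0}^{s-1}\frac{2(n_2-t)}{\ell-1-2t}.
\]
Taking logarithms and using that $2t=o(\ell)$ uniformly for $s$ up to $C\tfrac{n\log n_1}{n_1}$, together with $n_1/n\to0$ (so that both the replacement of each factor by $1-\tfrac{n_1}{2n}$ and the second-order term in $\log(1-x)$ are negligible against the leading sum), I would establish the uniform estimate $\P(S\geq s)=\exp\{-\tfrac{n_1 s}{2n}(1+o(1))\}$ on that range. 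This is the heart of the computation and the only place where the constant $2$ enters.

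Next comes a first/second moment argument for the count $X_{\geq k}$ of line components with at least $k$ vertices. Counting each qualifying line from its two endpoints gives $\E[X_{\geq k}]=\tfrac{n_1}{2}\P(S\geq k-2)$, so with the estimate above $\E[X_{\geq k}]=\tfrac12\,n_1^{\,1-\frac{n_1 k}{2n\log n_1}(1+o(1))}$. Taking $k=(1\pm\varepsilon)\tfrac{2n\log n_1}{n_1}$ makes the exponent $1-(1\pm\varepsilon)(1+o(1))$, so $\E[X_{\geq k}]\to0$ for the $+$ sign and $\E[X_{\geq k}]\to\infty$ for the $-$ sign. For the upper bound, Markov's inequality then yields $|\Cmaxl|\leq(1+\varepsilon)\tfrac{2n\log n_1}{n_1}$ w.h.p. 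For the lower bound I would control the second moment: conditioning on one line of size $\geq k$ removes only $O(k)=o(n)$ degree-$2$ vertices and two degree-$1$ vertices, so by the configuration-model-on-the-remaining-vertices principle (Lemma~\ref{lem:KernelDistribution} and the argument of Proposition~\ref{prop:poiconv}, applicable since $\elltwo/\ell\to0$) the conditional probability of a second disjoint long line equals the unconditional one up to $1+o(1)$. This gives $\E[X_{\geq k}^2]=\E[X_{\geq k}]^2(1+o(1))$, hence $X_{\geq k}\to\infty$ in probability, so w.h.p. there are at least $j$ lines of size $\geq(1-\varepsilon)\tfrac{2n\log n_1}{n_1}$, whence $|\Cjl|\geq(1-\varepsilon)\tfrac{2n\log n_1}{n_1}$. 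Combining the two bounds and letting $\varepsilon\downarrow0$ gives $\tfrac{n_1|\Cjl|}{n\log n_1}\pto2$.

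The main obstacle I anticipate is the second-moment factorization: I must show that the lengths of distinct lines are only weakly dependent uniformly over the regime of $n_1$, which ranges from slowly growing to $o(n)$. The delicate point is that the estimate $\P(S\geq s)=\exp\{-\tfrac{n_1 s}{2n}(1+o(1))\}$ must hold uniformly over the whole relevant window of $s$ and survive the conditioning on an already-discovered long line; verifying that the consumed $o(n)$ vertices perturb the rate $\tfrac{n_1}{2n}$ only by a $1+o(1)$ factor in every regime is the crux. Everything else is routine, including the passage from the internal count $S$ to the total line size (a negligible additive $2$) and, for the deduction of Theorem~\ref{thm:MainLow}, the comparison with cyclic components, which are only $\Op(n/n_1)$ by Proposition~\ref{prop:cycletot} and hence negligible against $\tfrac{2n\log n_1}{n_1}$.
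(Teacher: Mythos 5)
Your proposal is correct and follows essentially the same route as the paper: an exploration from a degree-$1$ endpoint giving the survival probability $\prod_t \frac{2(n_2-t)}{\ell-2t-1} = \exp\{-\frac{n_1 s}{2n}(1+o(1))\}$, a first-moment/Markov bound for the upper tail, and a second-moment/Chebyshev argument (via a second exploration on the leftover half-edges) for the lower bound. The only cosmetic difference is that you count line components of size $\geq k$ while the paper counts degree-$1$ vertices lying in large components (a factor of $2$ that both treatments handle correctly); your citation of Lemma~\ref{lem:KernelDistribution} for the second-moment factorization is slightly off-target, but the underlying ``remaining half-edges form a configuration model'' principle you invoke is exactly what the paper uses.
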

\proof
We start by proving the upper bound. For every $\alpha \in (0, \infty)$ we define
\eqn{N(1,\alpha):= \sharp \{v \in \Ncal_1: |\cluster (v)|> \alpha n\log n_1/n_1\},
}
where for every $v \in [n]$, $\cluster (v)$ is the connected component that contains $v$. 
To find bounds on $N(1,\alpha)$ we run the exploration from Definition \ref{defi:exploration} starting from a uniformly chosen vertex $v \in \Ncal_1$. Since there are no vertices of degree larger than $2$, at every step $t$, $|\Acal_t|\in \{0,1\}$, and the exploration of $\cluster (v)$ ends as soon as $e_2(t) \in \Lcal_{\neq 2}$, i.e., the first time the exploration finds another vertex of degree $1$. Thus, the only way the process can survive up to  time $\alpha n\log n_1/n_1$ is that $e_2(t) \notin \Lcal_{\neq 2}$, for all $t \leq \alpha n\log n_1/n_1$. We write the probability for this to happen as

\begin{equation}\begin{split}
\pi_\alpha:=\P(|\cluster (v)|> \alpha n\log n_1/n_1)&=\prod_{t=0}^{\alpha n \log n_1/n_1} \Big(\dfrac{2(n_2-t)}{\ell-2t-1}\Big)\\&=\prod_{k=0}^{\alpha n \log n_1/n_1}\Big(1-\dfrac{n_1-1}{\ell-2t-1}\Big).
\end{split}\end{equation}
We bound this quantity by

\begin{equation}\begin{split}
\pi_\alpha\leq \Big(1-\dfrac{n_1-1}{\ell}\Big)^{\alpha n \log n_1/n_1}=\e^{-\log n_1/2(1+o(1))}
\end{split}\end{equation}
\begin{equation}\begin{split}
\pi_\alpha\geq \Big(1-\dfrac{n_1-1}{\ell- 2\alpha n\log n_1/n_1}\Big)^{\alpha n \log n_1/n_1}=\e^{-\log n_1/2(1+o(1))}
\end{split}\end{equation}
Consequently,
\begin{equation}\label{eq:firstalpha}\begin{split}
\E[N(1,\alpha)]&= \sum_{v \in \Ncal_1}\P(|\cluster (v)|> \alpha n\log n_1/n_1)\\&=n_1\pi_\alpha=n_1\e^{-\alpha\log n_1/2(1+o(1))}=n_1^{(2-\alpha)/2(1+o(1))} .
\end{split}\end{equation}
By the first moment method,
\eqn{
\P\Big(\dfrac{n_1|\Cmaxl| }{n \log n_1} \geq \alpha\Big)\leq \E[N(1,\alpha)]/2,
}
so that, for every $\varepsilon >0$,
\eqn{
\lim_{n \to \infty}\P\Big(\dfrac{n_1|\Cmaxl| }{n \log n_1} \geq 2+\varepsilon\Big)=0.
}

Next we prove a sharp lower bound on the size of $|\Cjl|$ for every $j \geq 1$. Using the Chebychev inequality we write

\eqn{\label{eq:Chebychev}
\P\Big(\Cjl \leq \frac{\alpha n \log n_1}{n_1}\Big)=\P(N(1,\alpha)\leq 2j) \leq \dfrac{\E[N(1,\alpha)^2]-\E[N(1,\alpha)]^2}{(\E[N(1,\alpha)]-2j)^2}.
}
By \eqref{eq:firstalpha}, we know that for every $\alpha <2$, $\E[N(1,\alpha)]\to \infty$.
Thus, it is enough to prove that for every $\alpha<2$, 
\eqn{
\dfrac{\E[N(1,\alpha)^2]}{\E[N(1,\alpha)]^2}\to 1.
}
We write
\eqn{\label{eq:secondmoment}\begin{split}
\E[N(1,\alpha)^2]&=\sum_{v,w \in \Ncal_1}\P(|\cluster (v)|,|\cluster (w)|> \alpha n\log n_1/n_1)
\\&=\sum_{v \in \Ncal_1}\P(|\cluster (v)|> \alpha n\log n_1/n_1)\\
&\qquad +\sum_{v,w \in \Ncal_1, v \neq w}\P(|\cluster (v)|,|\cluster(w)|\geq \alpha n\log n_1/n_1, \cluster(v)\neq\cluster(w))\\
& \qquad+\P(|\cluster (v)|\geq \alpha n\log n_1/n_1, \cluster(v)=\cluster(w)).
\end{split}}

We know that almost surely for every $v \in \Ncal_1$ there exists exactly $1$ another vertex $w \in \Ncal_1 \cap \cluster (v)$, so that, for every $v \in \Ncal_1$,

\eqn{\begin{split}\label{eq:samecomp}
\sum_{w \in \Ncal_1, v \neq w}&\P(|\cluster (v)|\geq \alpha n\log n_1/n_1, \cluster(v)=\cluster(w))\\&=\sum_{v \in \Ncal_1}\P(|\cluster (v)|> \alpha n\log n_1/n_1)= \pi_\alpha.
\end{split}}

To bound the probability that $v$ and $w$ are in distinct large components, we now run two copies of the exploration process from Definition \ref{defi:exploration}, starting from two different vertices $v$ and $w$ in $\Ncal_1$. We first explore starting from $v$, and we let the exploration run up to time $\alpha n\log n_1/n_1$. If the exploration has survived, then we know that $|\cluster (v)|> \alpha n\log n_1/n_1$, and that $\Acal_{ \alpha n\log n_1/n_1}=\{e'\}$ for some half-edge $e' \notin \Lcal_{\neq 2}$. In this case, we start start running a new exploration from $w$. We know that $|\cluster (w)|> \alpha n\log n_1/n_1$, $\cluster (w) \neq \cluster (v)$ if the exploration starting from $w$ survives up to time $\alpha n\log n_1/n_1$ without finding  neither $e'$ nor any half-edge in $ \Lcal_{\neq 2}$.
%

We bound this probability by
\eqn{
\prod_{k=0}^{\alpha n \log n_1/n_1}\Big(1-\dfrac{n_1-2}{\ell-2k-2\alpha n\log n_1/n_1}\Big)\leq \prod_{k=0}^{\alpha n \log n_1/n_1}\Big(1-\dfrac{n_1-1}{\ell-2k-1}\Big)=\pi_\alpha ,
}
so that 

\eqn{\label{eq:diffcomp}
\P(|\cluster (v)|,|\cluster(w)|\geq \alpha n\log n_1/n_1, \cluster(v)\neq\cluster(w))\leq \pi_\alpha^2.
}
We thus obtain, substituting \eqref{eq:firstalpha}, \eqref{eq:samecomp} and \eqref{eq:diffcomp} into \eqref{eq:secondmoment}

\eqan{
\E[N(1,\alpha)^2]&\leq 2 \pi_\alpha n_1 + \pi_\alpha^2 n_1(n_1-1)
}
We further compute, recalling that $n_1 \pi_\alpha= \E[N(1,\alpha)]$,
\eqan{\label{eq:secondalpha}
\E[N(1,\alpha)^2]&\leq 2\E[N(1,\alpha)]+\E[N(1,\alpha)]^2
}
If $\alpha=2-\varepsilon$, then, by \eqref{eq:firstalpha}, $\E[N(1,\alpha)]\to \infty$, so we obtain
\eqn{
\E[N(1,\alpha)^2]=\E[N(1,\alpha)]^2(1+o(1)).
}
Consequently, by \eqref{eq:Chebychev}, for every $j \geq 1$,
\eqn{
\lim_{n\to \infty}\P\Big(\dfrac{n_1|\Cjl| }{n \log n_1} \leq 2-\varepsilon\Big)=\lim_{n\to \infty}\P(N(1,2-\varepsilon)\leq 2j) =0.\qed
}

\medskip
We can now finally prove Theorem \ref{thm:MainLow}.

\begin{proof}[Proof of Theorem \ref{thm:MainLow}]

By Lemma \ref{lem:Cmaxl}, for every $\alpha <2$, $j \in \mathbb N$,

\eqan{
\lim_{n \to \infty} \P\Big(|\cluster_j|\geq\dfrac{\alpha n \log n_1}{n_1 }\Big)\geq \lim_{n \to \infty} \P\Big(|\Cjl|\geq\dfrac{\alpha n \log n_1}{n_1 }\Big)=1.
}
On the other hand, by Proposition \ref{prop:cycletot} and Lemma \ref{lem:Cmaxl}, for every $\alpha >2$, $j \in \mathbb N$,
\eqan{
\lim_{n \to \infty} &\P\Big(|\cluster_j|\geq \dfrac{\alpha n \log n_1}{n_1 }\Big)\\&\leq \lim_{n \to \infty} \Big(\P\Big(|\Cmaxl|\geq \dfrac{\alpha n \log n_1}{n_1 }\Big)+\P\Big(|\Cmaxc|\geq \dfrac{\alpha n \log n_1}{n_1 }\Big)\Big) =0,\nonumber
}
bringing together upper an lower bound, we obtain the claim.
\end{proof}
	\section*{Acknowledgments}

The work in this paper is supported by the Netherlands Organisation for Scientific Research 
(NWO) through Gravitation-grant NETWORKS-024.002.003 and by the European Research Council (ERC) through Starting Grant Random Graph, Geometry and Convergence 639046.

\begin{small}
\bibliographystyle{abbrv}
\bibliography{LorenzosBib}

\begin{thebibliography}{10}

\bibitem{Beh07}
M.~Behrisch.
\newblock Component evolution in random intersection graphs.
\newblock {\em Electron. J. Combin.}, 14(1):Research Paper 17, 12, (2007).

\bibitem{Boll01}
B.~Bollob{\'a}s.
\newblock {\em Random graphs}, volume~73 of {\em Cambridge Studies in Advanced
  Mathematics}.
\newblock Cambridge University Press, Cambridge, second edition, (2001).

\bibitem{BolJanRio07}
B.~Bollob\'as, S.~Janson, and O.~Riordan.
\newblock The phase transition in inhomogeneous random graphs.
\newblock {\em Random Structures Algorithms}, 31(1):3--122, (2007).

\bibitem{DhaHofLeeSen16b}
S.~Dhara, R.~v.~d. Hofstad, J.~Leeuwaarden, and S.~Sen.
\newblock Heavy-tailed configuration models at criticality.
\newblock {\it arXiv:1612.00650}, (2016).

\bibitem{DhaHofLeeSen16a}
S.~Dhara, R.~van~der Hofstad, J.~S.~H. van Leeuwaarden, and S.~Sen.
\newblock Critical window for the configuration model: finite third moment
  degrees.
\newblock {\em Electron. J. Probab.}, 22:Paper No. 16, 33, (2017).

\bibitem{ErdRen59}
P.~Erd{\H{o}}s and A.~R{\'e}nyi.
\newblock On random graphs. {I}.
\newblock {\em Publ. Math. Debrecen}, {\bf 6}:290--297, (1959).

\bibitem{ErdRen60}
P.~Erd{\H{o}}s and A.~R{\'e}nyi.
\newblock On the evolution of random graphs.
\newblock {\em Magyar Tud. Akad. Mat. Kutat{\'o} Int. K{\"o}zl}, {\bf
  5}:17--61, (1960).

\bibitem{FedHof16}
L.~Federico and R.~v.~d. Hofstad.
\newblock Critical window for connectivity in the configuration model.
\newblock {\it arXiv:1603.03254}, (2016).

\bibitem{Hofs17}
R.~v.~d. Hofstad.
\newblock {\em Random graphs and complex networks. Volume 1}.
\newblock Cambridge Series in Statistical and Probabilistic Mathematics.
  Cambridge University Press, Cambridge, (2017).

\bibitem{HofJanLuc18}
R.~v.~d. Hofstad, S.~Janson, and M.~Luczak.
\newblock Component structure of the configuration model: barely supercritical
  case.
\newblock {\it arXiv:1611.05728}, (2014).

\bibitem{HofLucSpe10}
R.~v.~d. Hofstad, M.~J. Luczak, and J.~Spencer.
\newblock The second largest component in the supercritical 2{D} {H}amming
  graph.
\newblock {\em Random Structures Algorithms}, {\bf 36}(1):80--89, (2010).

\bibitem{Jan08}
S.~Janson.
\newblock The largest component in a subcritical random graph with a power law
  degree distribution.
\newblock {\em Ann. Appl. Probab.}, {\bf 18}(4):1651--1668, (2008).

\bibitem{KesZha90}
H.~Kesten and Y.~Zhang.
\newblock The probability of a large finite cluster in supercritical
  {B}ernoulli percolation.
\newblock {\em Ann. Probab.}, 18(2):537--555, (1990).

\bibitem{Luc92}
T.~{\L}uczak.
\newblock Sparse random graphs with a given degree sequence.
\newblock In {\em Random graphs, {V}ol.\ 2 ({P}ozna\'n, 1989)}, Wiley-Intersci.
  Publ., pages 165--182. Wiley, New York, (1992).

\bibitem{MolRee95}
M.~Molloy and B.~Reed.
\newblock A critical point for random graphs with a given degree sequence.
\newblock In {\em Proceedings of the {S}ixth {I}nternational {S}eminar on
  {R}andom {G}raphs and {P}robabilistic {M}ethods in {C}ombinatorics and
  {C}omputer {S}cience, ``{R}andom {G}raphs '93'' ({P}ozna\'n, 1993)},
  volume~{\bf 6}, pages 161--179, (1995).

\bibitem{Wor81}
N.~C. Wormald.
\newblock The asymptotic connectivity of labelled regular graphs.
\newblock {\em J. Combin. Theory Ser. B}, {\bf 31}(2):156--167, (1981).

\end{thebibliography}

\end{small}

%
%
%
%
%

\end{document}